    \crefname{enumi}{}{}
    \Crefname{enumi}{Item}{Items}
    \crefname{equation}{}{}
    \Crefname{equation}{Equation}{Equations}
\newtheorem{theorem}{Theorem}[section]
\newtheorem{theo}[theorem]{Theorem}
\newtheorem{prop}[theorem]{Proposition}
\newtheorem{lemma}[theorem]{Lemma}
\newtheorem{cor}[theorem]{Corollary}
\newtheorem{main}[theorem]{Main Result}
\theoremstyle{definition}
\newtheorem{rem}[theorem]{Remark}
\def\<{\langle}
\def\>{\rangle}
\newcommand{\PG}{\mathsf{PG}}
\newcommand{\F}{\mathbb{F}}
\newcommand{\cP}{\mathcal{P}}
\newcommand{\cL}{\mathcal{L}}
\newcommand{\N}{\mathbb{N}}
\newcommand{\df}[2][]{\ifthenelse{\equal{#1}{}}{\index[terms]{#2}}{\index[terms]{#1}}{#2}}
\begin{document}

\author{Ferdinand Ihringer\thanks{Supported by a senior postdoctoral grant of FWO-Flanders}\and Paulien~Jansen\thanks{Supported by FWO--Flanders and the L'Or\'eal-UNESCO ``For Women in Science'' program} \and Linde Lambrecht \and Yannick Neyt\thanks{Supported by the Senior Fundamental Research Project G023121N of the FWO--Flanders}  \and Daan Rijpert\thanks{Supported by Ghent University---Special Research Fund, grant BOF.24Y.2021.0029.01} \and Hendrik Van Maldeghem \and Magali Victoor}
\title{Local recognition of the point graphs of some Lie incidence geometries}
\date{}
\maketitle
\begin{abstract}
Given a finite Lie incidence geometry which is either a polar space of rank at least $3$ or a strong parapolar space of symplectic rank at least $4$ and diameter at most $4$, or the parapolar space arising from the line Grassmannian of a projective space of dimension at least $4$,  we show that its point graph is determined by its local structure. This follows from a more general result which classifies graphs whose local structure can vary over all local structures of the point graphs of the aforementioned geometries.  In particular, this characterises the strongly regular graphs arising from the line Grassmannian of a finite projective space, from the half spin geometry related to the quadric $Q^+(10,q)$ and from the exceptional group of type $\mathsf{E_6}(q)$ by their local structure. 
\end{abstract}

{MSC 2020: 51E24; 20E42. \\ Keywords: Lie incidence geometry, point graph, strongly regular graph, local recognition.}

\section{Introduction}
Let, with standard notation, $\Gamma=(V,\sim)$ be a graph and for each $v\in X$, let $\Gamma(v)$ be the \emph{local graph at $v$}, that is, the graph induced on the neighbours of $v$. We say that a graph $\Gamma$ is locally isomorphic to a graph $\Gamma'$ if the set of local graphs $\{\Gamma(v)\mid v\in V\}$ is a subset of $\{\Gamma'(v')\mid v'\in V'\}$ (identifying isomorphism types). If all local graphs in $\Gamma'$ are isomorphic, then one wishes to conclude that each connected locally isomorphic graph is (globally) isomorphic. 
In this case, we say that $\Gamma$ is determined by its local structure. 
Call a graph $\Gamma$ \emph{locally $\Lambda$}
if all $\Gamma(v)$ are isomorphic to $\Lambda$.
For a graph $\Gamma$ locally $\Lambda$, it seems fair to say that the larger the diameter of $\Gamma$, 
the less likely it is that it is unique as being locally $\Lambda$, since in this case it 
is more likely that nontrivial quotients exist.
In particular, Weetman \cite{Wet:94} shows that if $\Lambda$ is a finite 
graph of girth at least six, then there exists an infinite graph 
$\Gamma$ which is locally $\Lambda$. 

Nevertheless, for many choices of $\Lambda$ all graphs $\Gamma$ which 
are locally $\Lambda$ have been classified. For instance, Hall shows that 
there are precisely three graphs which are locally Petersen \cite{Hal:80}.
More generally, for at least the following $\Lambda$ a classification
or at least a partial classification is known (ordered by the date of publication):
graphs which are locally polar of order $2$ \cite{Bue-Hub:77},
locally cotriangular graphs \cite{Hal-Shu:85},
locally icosahedral graphs \cite{Blo-Bro-Bus-Coh:85},
locally $\Lambda$ graphs for small $\Lambda$ \cite{Hal:85},
locally Kneser graphs \cite{Hal:87},
locally 4-by-4 grid graphs \cite{Blo-Bro:89},
locally Petersen or $K_{3,3}$-graphs \cite{Blo-Bro:92},
locally Paley graphs \cite{Bro:00},
locally co-Heawood graphs \cite{Bro-FdF-Shp:03}. 
Weetman \cite{Wet:94-2} and Brouwer \cite{Bro:95} show that $\Gamma$
has finite diameter if it is locally $\Lambda$ for many choices in which 
$\Lambda$ is strongly regular. In particular, for such $\Lambda$ there 
is a finite list of graphs for which $\Gamma$ is locally $\Lambda$.
The characterisation by local graphs is also crucial in the 
characterisation of distance-regular graphs, for instance 
see Corollary 5.6 in \cite{Gav-Koo:18} in case of the 
characterisation of certain Grassmann graphs, or also
\cite{Gav-Mak:11}. Here it is usually assumed that $\Gamma$ is distance-regular.

In the present paper, we deal with local characterisations of distance regular graphs arising from spherical buildings by taking one type of vertices of the building as the vertices of our graph, adjacent when contained in adjacent chambers. These graphs are the point graphs of the corresponding so-called \emph{Lie incidence geometries}, that is, incidence geometries arising from spherical buildings in a well-defined way. The idea is to use some local characterisation of the underlying geometries.  In order to do so, we must overcome two difficulties: (1) We must define the lines of the geometry from the given graph and the local data; (2) Since most local geometric characterisations use the framework  of the (strong) parapolar spaces, we must show that the obtained point-line geometry is a parapolar space. The case where we start with the point graph of a polar space has to be considered separately and shall be done using the axiom system of Buekenhout \& Shult \cite{Bue-Shu:74}.  

With the notation that we shall introduce in \cref{notation}, the following general local recognition theorem  is a main consequence of our results. 

\begin{main}\label{main}
Let $\Delta$ be a finite Lie incidence geometry, which is either a strong parapolar space with symplectic rank at least $4$ and diameter at most $4$, or the parapolar space arising from the line Grassmannian of a projective space of dimension at least $4$, or a polar space with rank at least $3$. Then the point graph of $\Delta$ is, as a connected graph, completely determined by its local structure.
\end{main}
This will be a consequence of the more detailed Main Results~\ref{main3} and~\ref{main2}, which we will state after introducing some preliminaries in the next section. The other sections are then devoted to the proofs of these main results. 

\cref{main} implies that the following strongly regular graphs are determined by their local structure:  the graphs on the lines of (finite) projective spaces (adjacent when non-disjoint), the point graphs of polar spaces, the graph on half of the maximal singular subspaces of the hyperbolic quadric $Q^+(10,q)$ (adjacent when intersecting in a plane) and the $\mathsf{E_{6,1}}(q)$ graph (with notation and terminology of \cite{Bro-Mal:22}). 

The case of being locally isomorphic to the line Grassmannian of a projective space of dimension at least $4$ is also covered by Corollary 5.6 of \cite{Gav-Koo:18}, although under the additional assumption that $\Gamma$ is strongly regular and has the right parameters. 

\section{Preliminaries}\label{notation}
The main players in this paper are some Lie incidence geometries arising from spherical buildings. Since these are point-line geometries, we first introduce some terminology concerning these.
\subsection{Point-line geometries}
		A \textit{point-line geometry} is a pair $\Delta=(X,\cL)$ with $X$ a set and $\cL$ a set of subsets of $\cP$. The elements of $X$ are called \textit{points}, the members of $\cL$ are called \textit{lines}. If $p \in X$ and $L \in \cL$ with $p\in L$, we say that the point $p$ \emph{lies on} the line $L$, and the line $L$ \emph{contains} the point $p$, or \emph{goes through} $p$.  If two (not necessarily distinct) points $p$ and $q$ are contained in a common line, they are called \textit{collinear}, denoted $p \perp q$ (since we will always deal with geometries in which each point lies on at least one line, we always have $p\perp p$, for each $p\in X$).  If they are not contained in a common line, we say that they are \textit{noncollinear}. For any point $p$ and any subset $P \subset \cP$, we denote \[p^\perp := \{q \in \cP\mid q \perp p\} \text{ and } P^\perp := \bigcap_{p \in P} p^\perp.\]
		A\textit{ partial linear space} is a point-line geometry in which every line contains at least three points, and where there is a unique line through every pair of distinct collinear points $p$ and $q$. That line is then denoted with $pq$. A point-line geometry is \emph{degenerate} if there is some point collinear to each point. 

		Let $\Delta = (X,\cL)$ be a partial linear space.   A subset $S\subseteq X$ is called a \textit{subspace} of $\Delta$ when every line $L$ of $\cL$ that contains at least two points of $S$, is contained in $S$. 
 A subspace $S$ in which all points are mutually collinear, or equivalently, for which $S \subseteq S^\perp$, is called a \textit{singular} subspace. If $S$ is moreover not contained in any other singular subspace, it is called a \textit{maximal} singular subspace.

We now take a look at two specific classes of point-line geometries: the polar and the parapolar spaces. 
\subsection{Polar and parapolar spaces}\label{defPPS}
Concerning polar spaces, we take the viewpoint of Buekenhout--Shult \cite{Bue-Shu:74}. 
Since it suffices in this paper to consider polar spaces of finite rank and which are not degenerate, we include this in our definition. 

A \emph{gamma space} is a point-line geometry $\Delta=(X,\cL)$ such that for each line $L\in\cL$ and each point $p\in X$, either no, or exactly one, or all points of $L$ are collinear to $p$. 

A \emph{Shult space} is a point-line geometry $\Delta=(X,\cL)$ such that for each line $\in\cL$ and each point $p\in X$, either exactly one, or all points of $L$ are collinear to $p$. 

A \emph{polar space (of rank $r$)}, $2\leq r\in\N$, is a Shult space that is not degenerate and for which the maximal singular subspaces are projective spaces of dimension $r-1$.  

Concerning parapolar spaces, we take the viewpoint of Cooperstein \cite{Coo:77}, as explained in Chapter~13 of \cite{Shu:11}. Again, it suffices to consider parapolar spaces of finite symplectic rank. The following definition is motivated by Lemma~13.4.2 of \cite{Shu:11}. 

A parapolar space of symplectic rank at least $r$ (resp.\ uniform symplectic rank $r$), $3\leq r\in\N$, is a gamma space $\Delta=(X,\cL)$ such that for each pair of distinct non-collinear points $x,y\in X$, the geometry with point set $x^\perp\cap y^\perp$ and set of lines all members of $\cL$ completely contained in $x^\perp\cap y^\perp$ is either empty, a single point, or a polar space of rank at least $r-1$ (resp.\ exactly rank $r-1$), and such that for every line $L\in\cL$, the set $L^\perp$ contains at least two non-collinear points. 

Let $\Delta=(X,\cL)$ be a parapolar space of symplectic rank at least $3$. By Lemma~13.4.1(2) of \cite{Shu:11}, the singular subspaces of $\Delta$ are projective spaces. For $x\in X$, the \emph{point residual (at $x$)} is the point-line geometry with point set the set of all lines of $\Delta$ through $x$ and with as set of lines the line pencils with vertex $x$ in some singular subspace of $\Delta$ isomorphic to a projective plane. 

Our definition of parapolar spaces  does not exclude the possibility of being a polar space. A parapolar space $\Delta=(X,\cL)$ shall be called \emph{proper} when it is not a polar space, that is, when there exist a point $p\in X$ and a line $L\in \cL$ no point of which is collinear to $p$. 

\subsection{Lie incidence geometries}

We now sketch how Lie incidence geometries arise, deferring to the literature for the precise definition of the concept of a spherical building (see for instance \cite{Abr-Bro:08,Tits:74}).  As in the latter reference, we view a spherical building as a numbered simplicial chamber complex, that is, a simplicial complex with a type function on the set of vertices such that each chamber (which is a maximal simplex) contains precisely one vertex of each type. Let $i$ be a type of a building of type $\mathsf{X}_n$, where $\mathsf{X}_n$ is a connected spherical Coxeter diagram. A simplex obtained from a chamber by deleting the vertex of type $i$ is called an \emph{$i$-panel}.  Let $X$ be the set of vertices of type $i$ and let $\cL$ be the set of subsets of $X$ with generic member the set of vertices of type $i$ forming a chamber together with a fixed $i$-panel. Then $\Delta=(X,\cL)$ is a point-line geometry, called the \emph{$i$-Grassmannian} of the corresponding spherical building, and a \emph{Lie incidence geometry of type $\mathsf{X}_{n,i}$}.

If the diagram $\mathsf{X}_n$ is simply laced and the building is finite, then the building is defined over a unique finite field $\F_q$, for some prome power $q$ and we denote the corresponding Lie incidence geometries by $\mathsf{X}_{n,i}(q)$. The Lie incidence geometries of type $\mathsf{B}_{n,1}$ are polar spaces and we will not need a special notation for them depending on the diagram.  We just remark that the polar spaces $\mathsf{D}_{n,1}$ are the point-line geometries arising from non-degenerate hyperbolic quadrics $Q^+(2n-1,q)$ in the projective space $\PG(2n-1,q)$ (using standard notation), and that the corresponding so-called half spin geometries $HS(2n-1,q)$ are the Lie incidence geometries $\mathsf{D}_{n,n}(q)$. 

A Lie incidence geometry of type $\mathsf{X}_{n,i}$ is often represented by encircling the node of type $i$ in the Coxeter diagram $\mathsf{X}_n$. This representation has the advantage of making the dimensions of the maximal singular subspaces apparent: they are equal to the lengths of the longest linear subdiagrams where the encircled node is an initial node. For instance, the maximal singular subspaces of a Lie incidence geometry of type $\mathsf{E_{6,}}$ have dimensions $4$ and $5$, as is apparent from the diagram 
\[\begin{dynkinDiagram}E{******}
\draw (root 1) circle (4pt);
\end{dynkinDiagram}\]
by deleting either the upper vertex, or the two vertices at the right. 
\subsection{Restatement of the Main Result}
Denoting the point graph of a Lie incidence geometry $\mathsf{X}_{n,i}(q)$ by $\Gamma(\mathsf{X}_{n,i}(q))$, $n\in\N$, $i\in\{1,2,\ldots,n\}$, $q$ a prime power and $\mathsf{X}_n$ a simply laced spherical Dynkin diagram, \cref{main} follows from the following more general (concerning hypotheses) and at the same time more specific (enumerating all concrete possibilities) results. 

\begin{main}\label{main3}
Let $\Gamma'$ be the disjoint union of the point graphs of all finite polar spaces of rank at least $3$. Then any connected graph $\Gamma$ locally isomorphic to $\Gamma'$ is the point graph of a finite polar space of rank at least $3$.  
\end{main}
Note that we do not even require that different connected components of $\Gamma'$ in \cref{main3} are defined over the same finite field,  nor do we assume that the graph $\Gamma$ is finite. 

For parapolar spaces, there is an explicit list \cite{Meu-Mal:22} of Lie incidence geometries that are strong parapolar spaces of symplectic rank at least $4$ and diameter at most $4$, or of symplectic rank $3$ and diameter $2$. This allows of the following statement.

\begin{main}\label{main2}
Let $\Gamma$ be a connected graph locally isomorphic to the disjoint union of  $\Gamma(\mathsf{A}_{n,2}(q))$, $n\geq 4$, $q$ ranging over all prime powers, $\Gamma(\mathsf{D}_{n,n}(q))$, $5\leq n\leq 9$, $q$ ranging over all prime powers,  $\Gamma(\mathsf{E_{6,1}}(q))$, $q$ again ranging over all prime powers, and $\Gamma(\mathsf{E_{7,7}}(q))$, $q$ once again ranging over all prime powers. Then $\Gamma$ is isomorphic to either $\Gamma(\mathsf{A}_{n,2}(q))$, $n\geq 4$, $\Gamma(\mathsf{D}_{n,n}(q))$, $5\leq n\leq 9$, $\Gamma(\mathsf{E_{6,1}}(q))$, or $\Gamma(\mathsf{E_{7,7}}(q))$, for some prime power $q$. 
\end{main}

Note that, for any prime power $q$, $\Gamma(\mathsf{A}_{n,2}(q))$, $n\geq 4$, $\Gamma(\mathsf{D_{5,5}}(q))$ and  $\Gamma(\mathsf{E_{6,1}}(q))$ are strongly regular (hence have diamter 2), whereas  $\Gamma(\mathsf{D_{6,6}}(q))$, $\Gamma(\mathsf{D_{7,7}}(q))$ and  $\Gamma(\mathsf{E_{7,7}}(q))$ have diameter $3$, and $\Gamma(\mathsf{D_{8,8}}(q))$ and $\Gamma(\mathsf{D_{9,9}}(q))$ have diameter $4$. 

Since the vertices of the graphs we will consider are the points of a Lie incidence geometry, we will from now on deviate from standard notation and denote the vertex set of a graph by $X$.

\section{Proof of \cref{main3} and most of \cref{main2}}
Le $\Gamma=(X,\sim)$ be a graph and $q$ a natural number. The \emph{$q$-clique extension $q\Gamma$} of $\Gamma$ is the graph with vertices $t_i(x)$, $i\in\{1,2,\ldots,q\}$, $x\in X$, with $t_i(x)$ and $t_j(y)$ adjacent if either $x=y$ and $i\neq j$, or $x\sim y$.  If the set of vertices equal or adjacent to a vertex $x\in X$ coincides with the set of vertices equal or adjacent to $y\in X$,  then the sets $\{t_i(x)\mid i=1,2,\ldots,q\}$ and $\{t_i(y)\mid i=1,2,\ldots,q\}$ cannot be distinguished in $q\Gamma$ (in fact, in their union, every vertex plays the same role). However, this is the only obstruction, as we will show below. For an arbitrary set $S$ of vertices, we denote by $S^\perp$ the set of vertices equal of adjacent to every vertex in $S$. For $S=\{x\}$, we denote $S^\perp=x^\perp$.

\begin{lemma}\label{lem1}
Let $\Gamma=(X,\sim)$ be a graph and let $q$ be a natural number. Suppose $x\in X$ has the property that the set $(x^\perp)^\perp$ coincides with $\{x\}$. Then in $q\Gamma$, we have $\{t_i(x)\mid i=1,2,\ldots,q\}=(t_j(x)^\perp)^\perp$, for each $j\in\{1,2\ldots,q\}$. 
\end{lemma}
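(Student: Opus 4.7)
The proof should be a direct unpacking of the definitions, and I would not expect any real obstacle. The key observation is that in the clique extension $q\Gamma$, the relation ``equal or adjacent'' is completely indifferent to the indices: for any two vertices $t_\ell(z)$ and $t_k(y)$ of $q\Gamma$, we have $t_\ell(z)\in t_k(y)^\perp$ if and only if either $z=y$ or $z\sim y$ in $\Gamma$, that is, if and only if $z\in y^\perp$. Indeed, if $z=y$ and $\ell=k$ they are equal; if $z=y$ and $\ell\neq k$ they are adjacent by the first clause in the definition of $q\Gamma$; and if $z\neq y$, equality is impossible and adjacency reduces to $z\sim y$.

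Using this, the first step is to identify $t_j(x)^\perp$ in $q\Gamma$. By the observation above, $t_k(y)\in t_j(x)^\perp$ if and only if $y\in x^\perp$ in $\Gamma$. Hence
\[
t_j(x)^\perp=\{t_k(y)\mid y\in x^\perp,\ k\in\{1,2,\ldots,q\}\}.
\]
Note that this set is independent of the choice of $j$.

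The second step is to compute $(t_j(x)^\perp)^\perp$. A vertex $t_\ell(z)$ of $q\Gamma$ lies in this set precisely when it is equal or adjacent to $t_k(y)$ for every $y\in x^\perp$ and every $k\in\{1,\ldots,q\}$. Applying the key observation again, this means $z\in y^\perp$ for every $y\in x^\perp$, i.e.\ $z\in (x^\perp)^\perp$. By hypothesis $(x^\perp)^\perp=\{x\}$, so $z=x$, while $\ell$ is unconstrained. Therefore
\[
(t_j(x)^\perp)^\perp=\{t_\ell(x)\mid \ell\in\{1,2,\ldots,q\}\},
\]
which is exactly the claim of the lemma.

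The only point that requires a modicum of care is the distinction between the two ways $t_\ell(z)$ can be ``equal or adjacent'' to $t_k(y)$ when $z=y$ (equal if $\ell=k$, adjacent otherwise), so that one does not accidentally exclude the vertex $t_j(x)$ itself from $t_j(x)^\perp$; but once this is set up cleanly, no step is harder than rewriting a definition.
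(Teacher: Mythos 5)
Your proof is correct and is exactly the direct unpacking of the definitions that the paper intends (the paper's own proof is the one-liner ``this follows immediately from the definition of $q\Gamma$''). The key observation that ``equal or adjacent'' in $q\Gamma$ between $t_\ell(z)$ and $t_k(y)$ depends only on whether $z\in y^\perp$ in $\Gamma$ is the right way to make the computation clean.
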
 

\begin{proof}
This follows immediately from the definition of $q\Gamma$. 
\end{proof}

Under the assumptions of \cref{lem1}, we call the set $\{t_i(x)\mid i=1,2,\ldots,q\}$ a \emph{ray}, or \emph{the ray of $x$}, and we say that the ray is \emph{reconstructable}. Also, the natural number $q$ is called the \emph{height} of the graph and is well defined under the assumptions of \cref{lem1}.

Since each vertex of the point graph of a polar space has the property mentioned in \cref{lem1}, each point of the point graph of any Lie incidence geometry which is a (proper) parapolar space also has that property. 

Now 
let $\Gamma=(X,\sim)$ be a connected graph which has at each of its vertices the local structure of the $q$-clique extension of the point graph of a Lie incidence geometry which is either a  parapolar space of symplectic rank at least $3$ and diameter $2$ in which the lines carry $q+1$ points, or a polar space of rank at least $2$ in which the lines carry exactly $q+1$ points (also $q$ depends on the vertex). This Lie incidence geometry is called the \emph{local geometry at the corresponding vertex} and denoted $\Delta(x)$; hence $\Gamma(x)\cong q\Delta(x)$.  Let $p\in X$ be an arbitrary vertex of $\Gamma$ and let $x\sim p$; so $x\in\Gamma(p)$.  Then, by our observation, the rays of the local graph $\Gamma(p)$ are reconstructable. The ray $R_x$ to which $x$ belongs is equal to $(x^{\perp_p})^{\perp_p}$, where $\perp_p$ denotes adjacency in $\Gamma(p)$. Hence $x^{\perp_p}=(x^\perp\cap p^\perp)\setminus\{p\}$ and so $R_x=(((x^\perp\cap p^\perp)\setminus\{p\})^\perp\cap p^\perp)\setminus\{p\}$. Then \[R:=R_x\cup\{p\}=((x^\perp\cap p^\perp)\setminus\{p\})^\perp\cap p^\perp=(x^\perp\cap p^\perp)^\perp.\] 
Now note that the latter is symmetric in $x$ and $p$, hence the ray  in $\Gamma(x)$ to which $p$ belongs is equal to $R\setminus\{x\}$. Since $R_x$ is determined in $\Gamma(p)$ by any of its members $y$, we now deduce that $R$ is determined by any pair $(p_1,p_2)$ of its points as $p_1\cup R_{p_2}$, with $R_{p_2}$ the ray in $\Gamma(p_1)$ to which $p_2$ belongs. We denote the set $R$ by $R[p_1,p_2]$ and call it an \emph{extended ray}. This already has, by connectivity,  the following consequence. 

\begin{lemma}\label{heights}
The heights of the local graphs at two distinct vertices of $\Gamma$ coincide. 
\end{lemma}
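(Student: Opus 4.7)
The plan is to exploit the symmetric description of extended rays that was established in the paragraph preceding the lemma, and then to propagate equality of heights along paths using connectivity of $\Gamma$.

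First, I would fix an edge $p\sim x$ of $\Gamma$ and look at the extended ray $R:=R[p,x]=(x^\perp\cap p^\perp)^\perp$. The computation just carried out shows two things about $R$: on the one hand, $R\setminus\{p\}$ is the ray of $\Gamma(p)$ through the vertex $x$, and on the other hand, since the formula for $R$ is manifestly symmetric in $p$ and $x$, the set $R\setminus\{x\}$ is the ray of $\Gamma(x)$ through $p$. Now by assumption $\Gamma(p)\cong q_p\Delta(p)$ for some natural number $q_p$, the height at $p$, and rays in a $q_p$-clique extension have exactly $q_p$ vertices. Hence $|R\setminus\{p\}|=q_p$. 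The same reasoning applied at $x$ gives $|R\setminus\{x\}|=q_x$, where $q_x$ is the height at $x$. Since $|R\setminus\{p\}|=|R|-1=|R\setminus\{x\}|$, we conclude $q_p=q_x$.

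Second, I would invoke connectivity of $\Gamma$: any two vertices $u,v\in X$ are joined by a path $u=v_0\sim v_1\sim\cdots\sim v_k=v$, and the previous step shows that the height is constant along each edge of this path, hence $q_u=q_v$.

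There is no real obstacle here; everything needed has already been set up in the discussion preceding the lemma, and the only subtlety to keep in mind is that \cref{lem1} genuinely applies to every vertex of each $\Delta(x)$ (which was noted just after that lemma for polar spaces and, via the parapolar-space assumption, for all the local geometries under consideration), so that rays in $\Gamma(p)$ and $\Gamma(x)$ are reconstructable and their sizes equal the respective heights.
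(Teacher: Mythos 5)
Your proof is correct and is precisely the argument the paper intends (the paper leaves it implicit, saying only that the symmetric description of the extended ray together with connectivity yields the lemma): the identity $R[p,x]=(x^\perp\cap p^\perp)^\perp$ shows that $|R|-1$ equals the height at $p$ and at $x$ simultaneously, and connectivity propagates this along paths. No gaps.
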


We now define the set $\cL$ as the set of all extended rays $R[p_1,p_2]$, with $p_1\sim p_2$, for $p_1,p_2\in X$ and we define the geometry $\Delta=(X,\cL)$. Clearly, the point graph of $\Delta$ is $\Gamma$. Also, the fact that lines are determined by any pair of their points translates in the property that $\Delta$ is a partial linear space. 

\begin{lemma}\label{gamma}
The geometry $\Delta=(X,\cL)$ is a gamma space. 
\end{lemma}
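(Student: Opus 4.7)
The plan is to check the gamma condition directly: for a line $L=R[p_1,p_2]\in\cL$ and a point $p\in X$, show that $L\cap p^\perp$ is empty, a singleton, or all of $L$. First I would record the basic observation that every line of $\Delta$ is a clique in $\Gamma$: by construction $L=\{p_1\}\cup R_{p_2}$, where $R_{p_2}$ is a ray in $\Gamma(p_1)\cong q\Delta(p_1)$, hence a $q$-clique whose vertices are all adjacent to $p_1$. This immediately disposes of the case $p\in L$ (where $L\subseteq p^\perp$ trivially) and makes the cases $|L\cap p^\perp|\in\{0,1\}$ vacuous.

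The only substantive case is $p\notin L$ with $p\sim q_1$ and $p\sim q_2$ for two distinct points $q_1,q_2\in L$. I would pick an arbitrary further point $q\in L\setminus\{q_1\}$ and argue that $p\sim q$. The natural arena is the local graph $\Gamma(q_1)\cong q\Delta(q_1)$, in which $p,q_2,q$ all appear, being common neighbours of $q_1$ (using that $L$ is a clique). Two ingredients are crucial: (i) since extended rays are determined by any two of their points (see the discussion preceding \cref{heights}), $R[q_1,q_2]=L=R[q_1,q]$, so $q_2$ and $q$ lie in the same ray of $\Gamma(q_1)$, namely the $q$-clique $L\setminus\{q_1\}$; and (ii) in a $q$-clique extension, two distinct rays are either completely adjacent or completely non-adjacent, directly from the definition of the extension.

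With these in hand the conclusion is forced: from $p\sim q_2$ in $\Gamma(q_1)$, either $p$ lies in the same ray as $q_2$ --- but that ray is $L\setminus\{q_1\}$, which would force $p\in L$, contradicting the hypothesis --- or $p$'s ray is distinct from and fully adjacent to the ray of $q_2$. Since $q$ lies in that same ray, the latter case yields $p\sim q$. As $q\in L\setminus\{q_1\}$ was arbitrary, $L\setminus\{q_1\}\subseteq p^\perp$, and together with $p\sim q_1$ this gives $L\subseteq p^\perp$. I do not anticipate any serious obstacle: the whole argument rests on the local clique-extension structure and the already-established fact that an extended ray is determined by any two of its points; the only subtlety is the ray dichotomy inside $\Gamma(q_1)$, which the hypothesis $p\notin L$ cleanly resolves.
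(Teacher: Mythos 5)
Your proof is correct, but it takes a genuinely different route from the paper's. You work in the local graph $\Gamma(q_1)$ at a point $q_1$ of the line $L$ itself, and the whole argument reduces to the observation that in a $q$-clique extension two distinct rays are either completely adjacent or completely non-adjacent; since $p\notin L$ forces $p$ into a ray of $\Gamma(q_1)$ different from the ray $L\setminus\{q_1\}$, the single adjacency $p\sim q_2$ propagates to all of $L\setminus\{q_1\}$. The paper instead works in the local graph $\Gamma(p)$ at the external point $p$: it forms the line of $\Delta(p)$ through the two points corresponding to $R[p,x]$ and $R[p,y]$, writes the union $U$ of the $q+1$ corresponding rays as $(x^\perp\cap y^\perp\cap p^\perp)^\perp\setminus\{p\}$, and shows $R[x,y]\subseteq U$ via an inclusion of perps together with a counting argument ($R[x,y]$ meets each ray of $\Gamma(p)$ in at most one vertex, $|R[x,y]|=q+1$, and $U$ is a union of exactly $q+1$ rays). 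Your argument is more elementary: it uses only the clique-extension combinatorics and the already-established fact that an extended ray is determined by any two of its points, and it does not need to know that lines of the local geometry $\Delta(p)$ have the form $(u^\perp\cap v^\perp)^\perp$. What the paper's heavier computation buys is \cref{projplane}: the identification of $(x^\perp\cap y^\perp\cap z^\perp)^\perp$ as a projective plane, and the fact that two extended rays inside it meet in a unique point, are read off directly from the proof of \cref{gamma} (``the proof of \cref{gamma} yields the following consequence''). With your proof of the lemma, that corollary would still require the paper's construction of $U$ as a separate argument.
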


\begin{proof}
Let $p,x,y\in X$, with $p\sim x\sim y\sim p$ and $R[p,x]\neq R[p,y]$. Note that the rays in $\Gamma(p)$ correspond to the points of $\Delta(p)$. Since in $\Delta(p)$ the line through two collinear points $u,v$ is given by $(u^\perp\cap v^\perp)^\perp$ (with $\perp$ the usual collinearity relation including equality),  in $\Gamma(p)$, the union $U$ of the rays corresponding to the line of  $\Delta(p)$ through the points $R[p,x]\setminus\{p\}$ and $R[p,y]\setminus\{p\}$  is given by \[U=(((x^\perp\cap y^\perp\cap p^\perp)\setminus\{p\})^\perp\cap p^\perp)\setminus\{p\},\]
which, as above, equals \[(x^\perp\cap y^\perp\cap p^\perp)^\perp\setminus\{p\}.\] Since $R[x,y]$ does not contain $p$ and since \[(x^\perp\cap y^\perp)^\perp\subseteq (x^\perp\cap y^\perp\cap p^\perp)^\perp,\] we see that $R[x,y]\subseteq U$. Now $R[x,y]$ has at most one vertex in common with each ray in $\Gamma(p)$ (as extended rays are determined by two points and $R[x,y]$ does not contain $p$).  Since $|R[x,y]|=q+1$ and there are precisely $q+1$ rays of $\Gamma(p)$ in $U$ (as there are $q+1$ points on each line in $\Delta(p)$),   we conclude that $p$ is collinear to all points of $R[x,y]$ in $\Delta$. 
\end{proof}

The proof of \cref{gamma} yields the following consequence. 

\begin{cor}\label{projplane}
Let $x,y,z\in X$ be pairwise adjacent with $x\notin R[y,z]$. Then $\pi:=(x^\perp\cap y^\perp\cap z^\perp)^\perp$ is, endowed with all members of $\cL$ contained in it, a projective plane.  In each of $\Gamma(x), \Gamma(y),\Gamma(z)$, the point set $\pi$ minus $x,y,z$, respectively, represents a line in the corresponding local geometry. 
\end{cor}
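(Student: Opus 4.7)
The plan is to leverage the calculation inside the proof of \cref{gamma} to describe $\pi$ explicitly, and then derive the projective plane axioms by elementary counting.

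First I would note that $x\notin R[y,z]$ forces $R[x,y]\neq R[x,z]$ (otherwise both would contain $y$ and $z$, hence equal $R[y,z]$ which would then contain $x$). Translating to the local graph $\Gamma(x)\cong q\Delta(x)$, the rays $R[x,y]\setminus\{x\}$ and $R[x,z]\setminus\{x\}$ correspond to distinct points $\overline y,\overline z$ of $\Delta(x)$, and these are collinear in $\Delta(x)$ since $y\sim z$ in $\Gamma$. Let $L_x$ be the unique line of $\Delta(x)$ through $\overline y$ and $\overline z$. The computation in the proof of \cref{gamma} then identifies $\pi\setminus\{x\}$ with the union of the $q+1$ rays of $\Gamma(x)$ indexed by the points of $L_x$; in particular $|\pi|=q^2+q+1$, and $\pi$ is a clique in $\Gamma$. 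This description already yields the last claim of the corollary for $\Gamma(x)$, and by the symmetry of the definition of $\pi$ in $x,y,z$, the analogous statements for $\Gamma(y)$ and $\Gamma(z)$ follow.

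Next I would verify that $\pi$, together with the members of $\cL$ it contains, is a linear space. For distinct $u,v\in\pi$ with $u=x$, or with $u,v\neq x$ but $R[x,u]=R[x,v]$, the extended ray $R[u,v]$ is one of the $q+1$ extended rays through $x$ in $\pi$, hence contained in $\pi$. The essential case, and the main obstacle of the plan, is $u,v\in\pi\setminus\{x\}$ with $R[x,u]\neq R[x,v]$. Applying \cref{gamma} to the triple $(x,u,v)$ yields $R[u,v]\subseteq(x^\perp\cap u^\perp\cap v^\perp)^\perp$, and the crucial observation is that this latter set is obtained by the same recipe as $\pi$ but with $u$ and $v$ in the roles of $y$ and $z$: it equals the union of $\{x\}$ with the $q+1$ rays of $\Gamma(x)$ indexed by the line of $\Delta(x)$ through $\overline u$ and $\overline v$. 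Since $\overline u,\overline v$ are two distinct points of $L_x$ and $\Delta(x)$ is a partial linear space, that line is $L_x$ itself, so $(x^\perp\cap u^\perp\cap v^\perp)^\perp=\pi$ and $R[u,v]\subseteq\pi$.

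Having established that $\pi$ is a linear space of $q^2+q+1$ points whose lines each have $q+1$ points, a standard double count gives that every point of $\pi$ lies on exactly $q+1$ lines. If two lines $M_1,M_2\subseteq\pi$ were disjoint, a point $p\in M_1$ would have to be joined to each of the $q+1$ points of $M_2$ by a distinct line different from $M_1$, requiring $q+2$ lines through $p$ in $\pi$ and contradicting the count. Hence any two lines in $\pi$ meet, and $\pi$ is a projective plane of order $q$.
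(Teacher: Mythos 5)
Your proof is correct, and while it runs on the same engine as the paper's --- the computation in the proof of \cref{gamma} identifying $(u^\perp\cap v^\perp\cap p^\perp)^\perp$ with $\{p\}$ plus the union of the rays of $\Gamma(p)$ over a line of $\Delta(p)$ --- it closes the argument differently. The paper asserts that $\pi$ is reconstructable from \emph{any} triple of its points not on a common extended ray, uses that symmetry to place two arbitrary extended rays of $\pi$ into the exact configuration of the proof of \cref{gamma} (one through the apex, one not), and reads off there that they meet in a unique point. You instead fix the single apex $x$ throughout: your observation that $(x^\perp\cap u^\perp\cap v^\perp)^\perp=\pi$ for any $u,v\in\pi\setminus\{x\}$ in distinct rays is exactly the paper's reconstructability claim restricted to triples containing $x$, and it gives you closure of $\pi$ under extended rays; you then get ``any two lines meet'' from the parameters ($q^2+q+1$ points, lines of size $q+1$, hence $q+1$ lines per point) by a standard count rather than from the intersection argument inside \cref{gamma}. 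Your route is somewhat more self-contained, since the paper's ``one shows that $\pi$ is determined by any triple\dots'' is left as an exercise, whereas you only need the apex-$x$ case, which you actually verify; the paper's route is shorter and avoids the counting step. Two small points worth making explicit: for the last assertion about $\Gamma(y)$ and $\Gamma(z)$ you should note that $x\notin R[y,z]$ also forces $y\notin R[x,z]$ and $z\notin R[x,y]$ (by the same two-points-determine-an-extended-ray argument you open with), so that $y$ and $z$ may legitimately play the role of the apex; and nondegeneracy of the plane uses $q\geq 2$, which holds because lines of the local geometries carry at least three points.
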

\begin{proof}
Just like extended rays are determined by any pair of its points, one shows that $\pi$ is determined by any triple of its points not contained in a single extended ray. Hence we may think of two arbitrary extended rays contained in $\pi$ as, with the notation of the proof of \cref{gamma}, one containing $p$ and the other containing $x$. Then the proof of \cref{gamma} shows that these two rays intersect in a unique point.  

The last assertion then also follows from thinking of $x,y$ and $z$ as the vertex $p$ in the proof of \cref{gamma}. 
\end{proof}
An immediate consequence of \cref{projplane} is the following.

\begin{cor}\label{cliques}
Let $C$ be a maximal clique of $\Gamma$ and $v\in C$. Then $C$, endowed with the extended rays, is a projective space, say of dimension $k$ which corresponds in the local geometry at $v$ to a maximal singular subspace of dimension $k-1$. 
\end{cor}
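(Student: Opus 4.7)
The plan is to first promote the clique $C$ to a genuine projective subspace of $\Delta$, and then read off its dimension from the local structure at an arbitrary vertex $v\in C$.

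First, I would show that every extended ray with two points in $C$ is contained in $C$, so that $C$ becomes a singular subspace of $\Delta$. Given $p_1,p_2\in C$ and any third vertex $w\in C$, the gamma property (\cref{gamma}) applied to the line $R[p_1,p_2]$ and the point $w$ --- which is already collinear to both $p_1$ and $p_2$ --- forces $w$ to be collinear, and hence adjacent, to every point of $R[p_1,p_2]$. Maximality of the clique $C$ then yields $R[p_1,p_2]\subseteq C$. In particular, $C$ equipped with the extended rays it contains is a linear space with thick lines of size $q+1$.

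Next I would check that every triangle in $C$ spans a projective plane that is itself contained in $C$. If $x,y,z\in C$ are pairwise adjacent and not on a common extended ray, then by \cref{projplane} the set $\pi:=(x^\perp\cap y^\perp\cap z^\perp)^\perp$ is a projective plane. Any $c\in C$ satisfies $c\in x^\perp\cap y^\perp\cap z^\perp$, so by definition $\pi\subseteq c^\perp$; that is, every point of $\pi$ is adjacent to $c$. Hence $C\cup\pi$ is a clique and maximality gives $\pi\subseteq C$. At this point, $C$ is a linear space with thick lines in which every three non-collinear points span a projective plane, so the classical Veblen--Young theorem identifies $C$ as a projective space of some dimension $k$.

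Finally, to pin down $k$, I would fix $v\in C$ and regard $C\setminus\{v\}$ as a maximal clique of $\Gamma(v)\cong q\Delta(v)$. The maximal cliques of a $q$-clique extension are precisely the $q$-blow-ups of maximal cliques of the base graph, and for the polar or parapolar space $\Delta(v)$ these are exactly the maximal singular subspaces. Hence $C\setminus\{v\}$ is the $q$-blow-up of a maximal singular subspace $M$ of $\Delta(v)$, say of projective dimension $d$. Sending an extended ray of $C$ through $v$ to the ray of $\Gamma(v)$ it restricts to --- equivalently, to the point of $\Delta(v)$ this ray represents --- is a bijection onto $M$, and the proof of \cref{projplane} shows that it maps projective planes of $C$ through $v$ to lines of $M$. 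Thus the residual of $C$ at $v$, a projective space of dimension $k-1$, is isomorphic to $M$, giving $k-1=d$. The step I expect to be the most delicate is precisely this dimension-matching: one has to verify that the bijection between rays through $v$ in $C$ and points of $M$ is an isomorphism of projective spaces, which means running \cref{projplane} in the reverse direction to recover each line of $M$ from a suitable triangle in $C$.
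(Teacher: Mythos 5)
Your proof is correct and follows the route the paper intends: the paper gives no written proof, simply declaring the corollary ``an immediate consequence'' of \cref{projplane}, and your argument (closing $C$ under extended rays via the gamma property of \cref{gamma}, closing it under the planes of \cref{projplane}, invoking Veblen--Young, and matching dimensions through the identification of maximal cliques of $q\Delta(v)$ with blow-ups of maximal singular subspaces) is exactly the expansion of that claim. The dimension-matching step you flag as delicate is handled by the last assertion of \cref{projplane} together with the explicit description of $U$ in the proof of \cref{gamma}, which lets you recover every line of $M$ from a triangle in $C$ as you require.
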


It follows now by connectivity that the maximal singular subspaces of all local geometries have the same dimension (since all local geometries we consider admit a point transitive automorphism group). Hence, in order to show \cref{main3}, it suffices to show the following proposition.

\begin{prop}\label{polarspace}
If for each vertex $p\in X$ the local geometry $\Delta(p)$ is a polar space of rank $r\geq 2$ having $q+1$ points per line, then $\Delta$ is a polar space of rank $r+1$ and, consequently, $\Gamma$ being the point graph of $\Delta$, it is the point graph of a polar space.  
\end{prop}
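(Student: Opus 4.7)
The plan is to verify the Buekenhout--Shult axioms so that $\Delta=(X,\cL)$ is a polar space of rank $r+1$. This amounts to showing three things: (i) the maximal singular subspaces of $\Delta$ are projective spaces of dimension $r$; (ii) $\Delta$ is non-degenerate; (iii) for every $p\in X$ and $L\in\cL$ with $p\notin L$, $|p^\perp\cap L|\in\{1,|L|\}$. Items (i) and (ii) are essentially immediate: for (i), by \cref{cliques} a maximal clique of $\Gamma$ is a projective space of some dimension $k$ corresponding in $\Delta(p)$ to a maximal singular subspace of dimension $k-1$, which by hypothesis equals $r-1$, so $k=r$. For (ii), fixing $p\in X$ and a neighbour $a$, non-degeneracy of the polar space $\Delta(a)$ supplies a point of $\Delta(a)$ non-collinear with the line $pa$, and any vertex $c$ on the corresponding line of $\Delta$ is then non-adjacent to $p$, for otherwise $\{p,a,c\}$ would span a projective plane by \cref{projplane}, contradicting this non-collinearity in $\Delta(a)$.

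The substantive step is (iii). Since $\Delta$ is a gamma space by \cref{gamma}, we have $|p^\perp\cap L|\in\{0,1,|L|\}$, so only the value $0$ has to be excluded. Arguing by contradiction, suppose a ``bad'' pair $(p,L)$ with $|p^\perp\cap L|=0$ exists, and choose one that minimises $d(p,L)$ in $\Gamma$; necessarily $d(p,L)\geq 2$. A minimality argument along any shortest path $p=x_0\sim x_1\sim\cdots\sim x_d=y\in L$ forces $d(p,L)=2$: if $d\geq 3$, then $(x_1,L)$ has $d(x_1,L)\leq d-1<d$ and so cannot itself be bad by minimality, whence $x_1$ is collinear with some $y'\in L$, giving the shorter path $p\sim x_1\sim y'\in L$. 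So we may assume $p\sim x_1\sim y\in L$ with $x_1\notin L$ and $p\not\sim y$, and the gamma axiom produces two subcases according as $x_1$ is collinear with all of $L$ or only with $y$. In the easier subcase $\{x_1\}\cup L$ spans a projective plane $\pi$ by \cref{projplane}; applying the Shult axiom in $\Delta(x_1)$ to the point $px_1$ and the line of $\Delta(x_1)$ corresponding to $\pi$ (a line on which $px_1$ cannot lie, since $p\in\pi$ would force $p\sim y$) yields a line $N\subset\pi$ through $x_1$ collinear with $p$, and then $p$ is collinear with $N\cap L\in L$, contradicting $|p^\perp\cap L|=0$.

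The main obstacle is the remaining subcase, where $x_1$ is collinear only with $y$ on $L$. The plan is to reduce it to the easier subcase by producing a single vertex $z\in X$ satisfying $z\sim p$ and $z$ collinear with all of $L$; this requires the Shult axiom in two distinct local polar spaces. First, in $\Delta(y)$ the points $L$ and $x_1y$ are non-collinear (else the plane they would span would make $x_1$ collinear with all of $L$), and since $\Delta(y)$ has rank $r\geq 2$ they admit a common neighbour, which is a line $Z$ through $y$ in $\Delta$ such that both $Z\cup L$ and $Z\cup x_1y$ span projective planes; every vertex of $Z\setminus\{y\}$ is then collinear with all of $L$ and with $x_1$. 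Second, in $\Delta(x_1)$ we apply Shult to the point $px_1$ and the line $\ell'$ of $\Delta(x_1)$ corresponding to the plane spanned by $Z$ and $x_1y$: the point $px_1$ cannot lie on $\ell'$ and cannot be collinear with all of $\ell'$ (each would force $p\sim y$), so it is collinear with a unique point of $\ell'$, which must be of the form $x_1z$ for some $z\in Z\setminus\{y\}$, yielding $p\sim z$. This $z$ now satisfies the hypotheses of the easier subcase with $x_1$ replaced by $z$, and that argument supplies the required point of $L$ collinear with $p$. The key difficulty is precisely this coordination between the two local polar spaces $\Delta(y)$ and $\Delta(x_1)$ to isolate a single common witness $z$.
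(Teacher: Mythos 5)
Your proposal is correct and follows essentially the same route as the paper: reduce the Shult axiom to the distance-$2$ case, split according to whether the intermediate vertex is collinear with one or all points of $L$, and in the hard subcase use a common neighbour in the local polar space at the point of $L$ together with the one-or-all axiom in the local polar space at the intermediate vertex to produce a witness collinear with $p$ and with all of $L$. Your treatment of the rank and non-degeneracy, and the explicit minimality argument replacing the paper's appeal to connectivity, are only cosmetic variations.
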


\begin{proof}
We begin with showing that $\Delta$ is a Shult space. Since $\Delta$ is a gamma space, we only have to prove that each line $L$ has at least one point collinear with each point $p$. This is trivial if $p\in L$, so assume $p\notin L$. Without loss of generality we may assume for a contradiction, and by connectivity, that there is no vertex on $L$ adjacent to $p$ in $\Gamma$, but there exists a vertex $y\in X$ adjacent to $p$ and adjacent to some point $x\in L$. Since $\Delta(x)$ is a polar space of rank at least $2$, we find a point $z\in \Gamma(x)\cap\Gamma(y)$ not on $R[x,y]$. In $\Delta(y)$, the extended ray $R[y,p]$ represents a point $p^*$ and, by \cref{projplane}, the set $\pi:=(x^\perp\cap y^\perp\cap z^\perp)^\perp$ represents a line $L^*$. Then there is a point $v^*$ on $L^*$ collinear to $p^*$ in $\Delta(y)$. This translates in $\Gamma(y)$ to the existence of some vertex $v\in \pi\setminus \{y\}$ adjacent to $p$. Since $v\sim p$, all vertices of $R[y,v]$ are adjacent to all those of $R[y,p]$. Again by \cref{projplane}, the sets $R[y,v]$ and $R[x,z]$ intersect in some point $s\sim p$. 

If $y$ is adjacent to all vertices of $L$, then we could have chosen $z$ on $L$ and $p\sim s\in L$. 

If not, then we can choose $z$ collinear to all points of $L$; it follows that $s$ is also collinear to all points of $L$. Letting $s$ play the role of $y$, we are now back to the situation in the previous paragraph and find a point on $L$ adjacent to $p$. This shows that $\Delta$ is a Shult space. 

It remains to show that no vertex is adjacent to all other vertices. If some vertex $v$ were adjacent to all other vertices, then clearly, for each $w\in X\setminus\{v\}$, the local geometry $\Delta(w)$  would be degenerate.  
\end{proof}

The parapolar spaces $\mathsf{A_{1,1}}(q)\times \mathsf{A}_{n,1}(q)$, $n\geq 2$, have maximal singular subspaces which are lines. This is not true in any other parapolar space which is isomorphic to a point residual of one of the parapolar spaces mentioned in the hypotheses of \cref{main2}. So we may assume that either all local geometries are isomorphic to $\mathsf{A_{1,1}}(q)\times \mathsf{A}_{n,1}(q)$, $n\geq 2$, or none are. In the present section, we continue with the latter assumption, delaying the proof of the former to the next section. 

\begin{prop}\label{perpquad}
If for each vertex $p\in X$ the local geometry $\Delta(p)$ is a strong parapolar space of uniform symplectic rank $r\geq 3$ having $q+1$ points per line, then for each pair of points $x,y\in X$ of $\Delta$ at distance $2$ in $\Gamma$, the subgeometry $x^\perp\cap y^\perp$ of $\Delta$ is a polar space of rank $r$.
\end{prop}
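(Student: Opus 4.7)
The plan is to apply \cref{polarspace} to the subgraph of $\Gamma$ induced on $S:=x^\perp\cap y^\perp$, considered as a point-line geometry whose lines are the lines of $\Delta$ contained in $S$. First I would check that $S$ is a subspace of $\Delta$: if a line of $\Delta$ has two points in $S$, then each of $x$ and $y$ is collinear with both of them, and the gamma-space property (\cref{gamma}) forces each of $x,y$ to be collinear with the whole line, whence the line lies in $S$.

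Next, fix $z\in S$ and analyse the local graph at $z$ inside $S$. In $\Delta(z)$, the extended rays $R[z,x]$ and $R[z,y]$ represent distinct points $x^*,y^*$ (they lie in different rays through $z$, since $x\not\sim y$) and non-collinear points (collinearity in $\Delta(z)$ would, by the standard ray-adjacency argument of \S3, force $x\sim y$). Because $\Delta(z)$ is a strong parapolar space of uniform symplectic rank $r$, the subgeometry $\Sigma^*:=x^{*\perp}\cap y^{*\perp}$ of $\Delta(z)$ is a polar space of rank $r-1$ with $q+1$ points per line. Using the gamma-space property again, every line of $\Delta$ through $z$ that meets $S\setminus\{z\}$ is actually contained in $S$, and this gives an adjacency-preserving bijection identifying the local graph of the induced subgraph at $z$ with the $q$-clique extension $q\Sigma^*$.

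The main obstacle is showing the subgraph induced on $S$ is connected. My strategy is to transport $S$ into $\Delta(x)$ via the map $\phi_x\colon z\mapsto z_x^*$, where $z_x^*$ is the point of $\Delta(x)$ represented by $R[x,z]$. Since $x\not\sim y$, gamma-space forces each line of $\Delta$ through $x$ to contain at most one point of $y^\perp$, so $\phi_x$ is injective. A plane-intersection analysis, using that for each projective plane $\pi$ of $\Delta$ through $x$ and a point of $S$ the intersection $\pi\cap y^\perp$ is either a single point or a line of $\Delta$ in $S$, shows that the image $T:=\phi_x(S)$ is a subspace of $\Delta(x)$ and that $\phi_x$ is an isomorphism of point-line geometries onto $T$. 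Because $\Sigma^*$ has rank $r-1\geq 2$, there exist non-collinear pairs $z_a,z_b\in S$; their images $z_a^*,z_b^*$ are non-collinear in the strong parapolar space $\Delta(x)$ and determine a unique symplecton $\xi$, a polar space of rank $r$. I would then prove $T=\xi$: on the one hand, the residue of $T$ at any point is, via $\phi_x^{-1}$, the residue of $S$ computed above, a polar space of rank $r-1$, matching the residue of $\xi$; on the other hand, for any common perp $c^*$ of $z_a^*,z_b^*$ in $\Delta(x)$ and representative $c'$, the plane $\langle x,z_a,c'\rangle$ of $\Delta$ must meet $y^\perp$ in a line (the alternative $\pi\cap y^\perp=\{z_a\}$ is incompatible with having $c^*\in z_b^{*\perp}$ as well), and the unique intersection of that line with $R[x,c']$ produces a point $v\in S$ collinear with both $z_a$ and $z_b$. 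Hence $T$ coincides with the polar space $\xi$ and is in particular connected, whence so is $S$.

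With the local structure $q\Sigma^*$ and connectivity secured, \cref{polarspace} applied to the induced subgraph on $S$ yields a polar space structure of rank $r$ on $S$ whose lines are the extended rays of that induced subgraph. Each such extended ray has $q+1$ points and contains the line of $\Delta$ through any two of its elements, so these lines coincide with the lines of $\Delta$ contained in $S$. Therefore $x^\perp\cap y^\perp$, equipped with the lines of $\Delta$ contained in it, is a polar space of rank $r$.
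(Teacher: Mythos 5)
Your core argument coincides with the paper's: for each $p\in x^\perp\cap y^\perp$, the strong parapolar hypothesis on $\Delta(p)$ makes ${x^*}^\perp\cap{y^*}^\perp$ a polar space of rank $r-1$, so the induced subgraph on $S=x^\perp\cap y^\perp$ is locally the $q$-clique extension of a rank-$(r-1)$ polar space at every vertex, and \cref{polarspace} then yields the conclusion. The genuine difference is that you explicitly verify connectivity of the induced subgraph on $S$, which is a standing hypothesis for \cref{polarspace} that the paper's proof applies without comment; this is a legitimate point, since a disconnected union of polar spaces is not a Shult space, so the step does need justification. Your connectivity argument --- transporting $S$ into $\Delta(x)$ via $z\mapsto R[x,z]$ and identifying the image $T$ with the symplecton $\xi$ of $\Delta(x)$ on the images of a non-collinear pair $z_a,z_b\in S$ --- is the right idea, but as written it is only a sketch: you establish $\xi\cap z_a^{*\perp}\cap z_b^{*\perp}\subseteq T$, which is not yet $\xi\subseteq T$ (one must iterate over further non-collinear pairs of $T$ or use a generation argument for polar spaces), and the reverse inclusion $T\subseteq\xi$ needs the convexity of symplecta spelled out rather than just a residue comparison. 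Note also that if you do carry out the identification $S\cong T=\xi$ in full, then $S$ is already exhibited as a polar space of rank $r$ and the final appeal to \cref{polarspace} becomes redundant; conversely, if you only want connectivity, a complete proof of $T=\xi$ is still what you need, so there is no shortcut hiding here. In short: same route as the paper on the main step, plus a partially worked-out repair of a hypothesis the paper leaves implicit.
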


\begin{proof}
Let $x,y\in X$ be two vertices of $\Gamma$ at mutual distance $2$ and let $p\in x^\perp\cap y^\perp$ be arbitrary. Let $x^*$ and $y^*$ be the points of $\Delta(p)$ corresponding to the extended rays $R[p,x]$ and $R[p,y]$, respectively. Since $\Delta(p)$ is a strong parapolar space of diameter $2$ and symplectic rank $r\geq 3$, the set ${x^*}^\perp\cap {y^*}^\perp$ defines a polar space $\Delta'(p)$ of rank $r-1$.  It follows that the local structure of the graph $\Gamma(x)\cap\Gamma(y)$ at $p$ is the $q$-clique extension of $\Delta'(p)$. Since this holds for every $p\in\Gamma(x)\cap\Gamma(y)$, \cref{polarspace} implies that $\Gamma(x)\cap\Gamma(y)$ is the point graph of a polar space of rank $r$. Since the extended rays are the lines of that polar space, the geometry  $x^\perp\cap y^\perp$ endowed with the extended rays is a polar space of rank $r$. 
\end{proof}

\begin{rem}
In the previous proposition, we may weaken the assumptions to the local geometries having symplectic rank \emph{at least $r\geq 3$} (with the same proof).  However, since in all our applications, the rank is constant, we limit ourselves to this case. The same remark applies to the next proposition.
\end{rem}

\begin{prop}\label{paraspace}
If for each vertex $p\in X$ the local geometry $\Delta(p)$ is always a strong parapolar space of uniform symplectic rank $r\geq 3$ having $q+1$ points per line, then the geometry $\Delta$ is a parapolar space of symplectic rank $r+1$. Also, the local geometry in $\Gamma$ at the point $p\in X$ is precisely the point residual at $p$ in $\Delta$. 
\end{prop}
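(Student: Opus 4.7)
The plan is to verify the two axioms defining a parapolar space of symplectic rank $r+1$ for $\Delta$, and then identify $\Delta(p)$ with the point residual at $p$.

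First, \cref{gamma} already gives that $\Delta$ is a gamma space. For the axiom on pairs of non-collinear points $x,y \in X$, I would split by $\Gamma$-distance. If $d_\Gamma(x,y) \geq 3$, then $x^\perp \cap y^\perp = \emptyset$, since any common neighbour would provide a path of length~$2$. If $d_\Gamma(x,y) = 2$, \cref{perpquad} directly supplies a polar space of rank exactly $r$ on $x^\perp \cap y^\perp$, with the extended rays inside playing the role of lines. Together these two cases give the symplectic rank $r+1$ property exactly as in the definition in \cref{defPPS}.

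Second, I would verify that each line $L \in \cL$ has two non-collinear points in $L^\perp$. Fix $p \in L$, so that $L \setminus \{p\}$ is a single ray of $\Gamma(p)$ corresponding to a point $\ell^* \in \Delta(p)$; the vertices of $L^\perp \cap \Gamma(p)$ then correspond exactly to points of $\Delta(p)$ collinear to $\ell^*$. Choosing any line $m$ of $\Delta(p)$ through $\ell^*$, the set $m^\perp \subseteq \ell^{*\perp}$ contains two non-collinear points $a,b$: when $\Delta(p)$ is a polar space of rank $r \geq 3$ the residual polar space of rank $r-1 \geq 2$ at $m$ is non-degenerate, while in the proper parapolar case this is immediate from the line-perpendicularity axiom applied to $m$ in $\Delta(p)$. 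Lifting $a$ and $b$ to vertices in their respective (distinct) rays in $\Gamma(p)$ produces two non-adjacent vertices of $\Gamma$, both collinear in $\Delta$ with every point of $L$.

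Finally, for the identification with the point residual, the points of $\Delta(p)$ are in bijection with the extended rays through $p$, which are exactly the lines of $\Delta$ through $p$, so the point sets match. For the lines, \cref{projplane} shows that each line of $\Delta(p)$, represented in $\Gamma(p)$ by a union of $q+1$ pairwise adjacent rays, corresponds to a singular plane $\pi$ of $\Delta$ through $p$, and the extended rays through $p$ contained in $\pi$ are exactly the line pencil at $p$ in $\pi$. Conversely, every singular plane of $\Delta$ through $p$ has this form and gives such a line of $\Delta(p)$. This yields the claimed isomorphism between $\Delta(p)$ and the point residual at $p$. The main obstacle is the line-perpendicularity step, since it requires transporting non-collinearity from the local geometry $\Delta(p)$ back through the $q$-clique extension and handling the polar and proper parapolar cases for $\Delta(p)$ in a unified way.
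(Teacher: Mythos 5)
Your proof is correct and follows essentially the same route as the paper: \cref{gamma} for the gamma-space axiom, \cref{perpquad} (together with the trivial distance-$\geq 3$ case) for the condition on $x^\perp\cap y^\perp$, and lifting two non-collinear points of $\ell^{*\perp}$ from the local geometry for the line-perpendicularity axiom --- which is precisely what the paper's terse appeal to the well-definedness of extended rays amounts to. One cosmetic slip: the residue of a \emph{line} $m$ in a polar space of rank $r$ is a polar space of rank $r-2$, not $r-1$, but since $r\geq 3$ this still supplies two non-collinear points in $m^\perp$, so the argument is unaffected.
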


\begin{proof}
By the definition of parapolar spaces given in \cref{defPPS},  we have to show that 
\begin{compactenum}[$(i)$]
\item \emph{$\Delta$ is a connected gamma space}. This is true by \cref{gamma} (and the assumption that $\Gamma$ is connected);
\item \emph{for each pair of distinct non-collinear points $x,y$, the geometry induced on $x^\perp\cap y^\perp$ is either empty, a single point or a polar space.} This is true by \cref{perpquad}.
\item \emph{For each line $L$, the set $L^\perp$ contains a pair of non-collinear points.} This follows from the well-definedness of the extended ray $R[x,y]$ in $\Gamma(x)$, where $x,y\in L$, $x\neq y$.
\end{compactenum}
The last assertion is immediate. This completes the proof. 
\end{proof}

We are now ready to prove part of \cref{main2}. We reformulate.

\begin{theo}\label{withoutsegre}
Let $\Gamma$ be a connected graph locally isomorphic to the disjoint union of  $\Gamma(\mathsf{D}_{n,n}(q))$, $4\leq n\leq 9$, $q$ ranging over all prime powers,  $\Gamma(\mathsf{E_{6,1}}(q))$, $q$ again ranging over all prime powers, and $\Gamma(\mathsf{E_{7,7}}(q))$, $q$ once again ranging over all prime powers. Then all local geometries of $\Gamma$ are mutually isomorphic and $\Gamma$ is isomorphic to either $\Gamma(\mathsf{D}_{n,n}(q))$, $\Gamma(\mathsf{E_{6,1}}(q))$, or $\Gamma(\mathsf{E_{7,7}}(q))$, for some prime power $q$. 
\end{theo}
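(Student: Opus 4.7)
My plan is to apply the general framework developed earlier in this section to promote the local hypothesis on $\Gamma$ to a global (para)polar space structure on $\Delta=(X,\cL)$, and then identify $\Delta$ by its point residual.

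First, I interpret the hypothesis. For each $v\in X$, $\Gamma(v)$ is isomorphic to the local graph at some vertex of one of $\Gamma(\mathsf{D}_{n,n}(q))$, $\Gamma(\mathsf{E_{6,1}}(q))$, or $\Gamma(\mathsf{E_{7,7}}(q))$. Standard parapolar-space theory identifies each such local graph as the $q$-clique extension of the point graph of the corresponding point residual: namely $\mathsf{A}_{n-1,2}(q)$ for $\mathsf{D}_{n,n}(q)$ (which for $n=4$ is the polar space $Q^+(5,q)$ via the Klein correspondence), $\mathsf{D}_{5,5}(q)$ for $\mathsf{E_{6,1}}(q)$, and $\mathsf{E_{6,1}}(q)$ for $\mathsf{E_{7,7}}(q)$. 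Each of these point residuals is either a polar space (only for $n=4$) or a strong parapolar space of uniform symplectic rank at least $3$ and diameter at most $4$. Setting $\Delta(v)$ to be this point residual, \cref{heights} ensures the line parameter $q$ is constant over $v$, and \cref{gamma} together with \cref{projplane} make $\Delta$ a partial linear gamma space; then \cref{polarspace} (in the $n=4$ subcase) or \cref{paraspace} promotes $\Delta$ to a (para)polar space whose point residual at each $v$ is $\Delta(v)$.

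Second, I argue that all $\Delta(v)$ are pairwise isomorphic. For adjacent vertices $u\sim v$, the common neighbourhood $\Gamma(u)\cap\Gamma(v)$ carries a polar-space substructure by \cref{perpquad}; its rank, together with the line length $q+1$, is simultaneously determined by $\Delta(u)$ and $\Delta(v)$. Since the candidate point residuals $\mathsf{A}_{n-1,2}(q)$ (for different $n$), $\mathsf{D}_{5,5}(q)$, and $\mathsf{E_{6,1}}(q)$ have pairwise distinct symplectic ranks and diameters, this forces $\Delta(u)\cong\Delta(v)$, and connectedness of $\Gamma$ propagates the isomorphism to all pairs.

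Finally, I identify $\Delta$ by matching its uniform point residual with the Lie incidence geometry it must come from: if $\Delta(v)\cong\mathsf{A}_{n-1,2}(q)$ then $\Delta\cong\mathsf{D}_{n,n}(q)$; if $\Delta(v)\cong\mathsf{D}_{5,5}(q)$ then $\Delta\cong\mathsf{E_{6,1}}(q)$; and if $\Delta(v)\cong\mathsf{E_{6,1}}(q)$ then $\Delta\cong\mathsf{E_{7,7}}(q)$. In each case, $\Gamma$, being the point graph of $\Delta$, appears in the conclusion's list. The hard part will be this final identification step: for each candidate point residual I need a uniqueness theorem, via the Meuren--Van Maldeghem classification \cite{Meu-Mal:22} or a direct Dynkin-diagram extension argument (possibly appealing to classical characterisations of the half-spin and exceptional Lie geometries by their point residuals), to ensure no other parapolar space of the relevant symplectic rank and diameter shares the same point residual as $\Delta$.
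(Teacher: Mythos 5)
Your overall architecture (build $\Delta$ from extended rays, promote it to a parapolar space via \cref{gamma}, \cref{projplane} and \cref{paraspace}, then identify $\Delta$ by its point residual) is the paper's, and your first and third steps are essentially correct, up to the precise references for the final residual-recognition lemmas (the paper uses Lemma~4.6 of \cite{Coh-Sch-Sch-Mal:22} and Lemmas~5.1, 5.3 and~5.5 of \cite{ForumSigma}, not \cite{Meu-Mal:22}).

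However, your second step --- showing that all local geometries are mutually isomorphic --- has a genuine gap. You propose to separate the candidate point residuals by symplectic rank and diameter, read off from a polar space of common neighbours. First, \cref{perpquad} concerns vertices at mutual distance $2$, not adjacent vertices, and its hypothesis already assumes a \emph{uniform} symplectic rank over all vertices, which is part of what you are trying to establish. More seriously, your invariants do not separate the candidates: the residuals $\mathsf{A}_{m,2}(q)$ of $\mathsf{D}_{m+1,m+1}(q)$ for $3\le m\le 8$ all have symplectic rank $3$ (their symps are Klein quadrics) and diameter $2$, so nothing in your argument rules out $\Delta(u)\cong\mathsf{A}_{4,2}(q)$ while $\Delta(v)\cong\mathsf{A}_{5,2}(q)$ for adjacent $u\sim v$. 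The paper uses a different invariant: by \cref{cliques}, a maximal clique of $\Gamma$ containing the extended ray $R[u,v]$ is a projective space whose dimension is read simultaneously in $\Delta(u)$ and in $\Delta(v)$ as one more than the dimension of a maximal singular subspace through the corresponding residual point; since the multisets of these dimensions ($\{2,m-1\}$ for $\mathsf{A}_{m,2}(q)$, $\{3,4\}$ for $\mathsf{D}_{5,5}(q)$, $\{4,5\}$ for $\mathsf{E}_{6,1}(q)$) are pairwise distinct, adjacent vertices have isomorphic local geometries, and connectivity of $\Gamma$ propagates this. You need to replace your rank-and-diameter criterion by this clique-dimension argument (or by some other invariant that genuinely distinguishes the $\mathsf{A}_{m,2}(q)$ for different $m$).
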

\begin{proof}
We know by \cref{paraspace} that $\Delta=(X,\cL)$ is a parapolar space of symplectic rank at least $4$. Now consider two adjacent vertices $x,y\in X$ of $\Gamma$. Then, by \cref{cliques}, the dimensions of the maximal singular subspaces in the local geometries $\Delta(x)$ and $\Delta(y)$, through the points corresponding to the extended ray $R[x,y]$, are the same. However, for the given parapolar space, these (well-known) dimensions (for each point) are the following:
\renewcommand{\arraystretch}{1.6}
\[\begin{array}{l|l|l}
\mathsf{A_{4,2}}(q) & \begin{dynkinDiagram}A{*****}
\draw (root 2) circle (4pt);
\end{dynkinDiagram} & 2\mbox{ and }3\\ 
\mathsf{A_{5,2}}(q) & \begin{dynkinDiagram}A{******}
\draw (root 2) circle (4pt);
\end{dynkinDiagram} & 2\mbox{ and }4\\ 
\mathsf{A_{6,2}}(q) & \begin{dynkinDiagram}A{*******}
\draw (root 2) circle (4pt);
\end{dynkinDiagram} & 2\mbox{ and }5\\ 
\mathsf{A_{7,2}}(q) & \begin{dynkinDiagram}A{********}
\draw (root 2) circle (4pt);
\end{dynkinDiagram} & 2\mbox{ and }6\\ 
\mathsf{A_{8,2}}(q) & \begin{dynkinDiagram}A{*********}
\draw (root 2) circle (4pt);
\end{dynkinDiagram} & 2\mbox{ and }7\\ 
\mathsf{D_{5,5}}(q) & \begin{dynkinDiagram}D{*****}
\draw (root 4) circle (4pt);
\end{dynkinDiagram} & 3\mbox{ and }4\\
\mathsf{E_{6,1}}(q) & \begin{dynkinDiagram}E{******}
\draw (root 1) circle (4pt);
\end{dynkinDiagram} & 4\mbox{ and }5
\end{array}  \]
Hence all local geometries are isomorphic. If these local geometries, which are the point residuals, are of type $\mathsf{A}_{n,2}$, $4\leq n\leq 8$, then by  Lemma~4.6 of \cite{Coh-Sch-Sch-Mal:22} (see also Lemma~5.3 of \cite{ForumSigma}), $\Delta$ is isomorphic to $\mathsf{D}_{n+1,n+1}(q)$. If these local geometries are isomorphic to $\mathsf{D_{5,5}}(q)$, then by Lemma~5.1 of \cite{ForumSigma}, $\Delta$ is isomorphic to $\mathsf{E_{6,1}}(q)$. Finally, if these local geometries are isomorphic to $\mathsf{E_{6,1}}(q)$, then by Lemma~5.5 of \cite{ForumSigma}, $\Delta$ is isomorphic to $\mathsf{E_{7,7}}(q)$. 

Since the point graph of $\Delta$ is $\Gamma$, the proof is complete.   
\end{proof}

\section{The case of symplectic rank 2 for the local geometry}

In this section, we tackle the remaining case of \cref{main2}: We assume all local geometries of $\Gamma$ are parapolar spaces isomorphic to $\mathsf{A_{1,1}}(q)\times \mathsf{A}_{n,1}(q)$, $n\geq 2$, for some (non-constant) prime power $q$ and some (non-constant) natural number $n$. The same arguments as in the previous section show that $q$ and $n$ are in fact constants. We define $\Delta$ in the same way as before, and we first show that in $\Delta$, for every pair of  points $x,y$ at mutual distance $2$, the geometry induced on $x^\perp\cap y^\perp$ by the extended rays is a generalised quadrangle isomorphic to a $(q+1)\times(q+1)$-grid. Where we previously could use \cref{polarspace} to prove that this geometry is a polar space, this fails now as it has rank 2. However, we propose an alternative argument in this specific case.  

\begin{lemma}
For every pair of points $x,y$ of $\Delta$ at mutual distance $2$, the geometry induced on $x^\perp\cap y^\perp$ by the extended rays is a $(q+1)\times(q+1)$-grid, where $q$ is the size of any ray.
\end{lemma}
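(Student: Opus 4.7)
The plan is to embed $x^\perp\cap y^\perp$ into the local geometry $\Delta(x)\cong \mathsf{A}_{1,1}(q)\times \mathsf{A}_{n,1}(q)$ as a subspace in which every point lies on exactly two lines, and then classify such configurations inside the product.

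First, I observe that each $p\in x^\perp\cap y^\perp$ lies on exactly two lines of $\Delta$ contained in $x^\perp\cap y^\perp$, each of size $q+1$. Indeed, in $\Delta(p)\cong\mathsf{A}_{1,1}(q)\times\mathsf{A}_{n,1}(q)$ the images $\hat x,\hat y$ of $x,y$ are distinct and non-collinear (as $x,y$ have graph distance exactly $2$), and in this product two non-collinear points have exactly two common neighbours. Each such common neighbour $\hat w$ corresponds to an extended ray $R[p,w]$ of $\Delta$ with $w\in x^\perp\cap y^\perp$, and applying \cref{gamma} twice (once for $x$ and once for $y$) forces the whole line $R[p,w]$ to sit inside $x^\perp\cap y^\perp$.

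Next, I define $\phi\colon x^\perp\cap y^\perp\to\Delta(x)$ by $p\mapsto p^*:=R[x,p]\setminus\{x\}$, considered as a point of $\Delta(x)$. Because $x\notin y^\perp$, \cref{gamma} applied to $y$ and the line $R[x,p]$ leaves at most one vertex of this line in $y^\perp$, so $\phi$ is injective. The identification $\Gamma(x)\cong q\Delta(x)$ then makes $\phi$ an embedding of graphs that both preserves and reflects adjacency, and the argument of \cref{projplane} (applied to $x,p,p'$ for adjacent $p,p'\in x^\perp\cap y^\perp$) shows that an extended ray in $x^\perp\cap y^\perp$ maps under $\phi$ to a line of $\Delta(x)$. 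Hence $\Gamma[x^\perp\cap y^\perp]$, as a partial linear space with the extended rays as lines, is isomorphic to the subgeometry of $\Delta(x)$ induced on $Y:=\phi(x^\perp\cap y^\perp)$, and each point of $Y$ lies on exactly two lines of $\Delta(x)$ contained in $Y$.

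The key step is to show that $Y$ is a \emph{subspace} of $\Delta(x)$, i.e., closed under its lines. If $p^*,q^*\in Y$ are collinear in $\Delta(x)$, then their preimages $p,q$ are adjacent in $\Gamma$, so the $\Delta$-line $R[p,q]$ exists; applying \cref{gamma} to $x$ (resp.\ $y$) and $R[p,q]$, using $p,q\in x^\perp$ (resp.\ $y^\perp$), yields $R[p,q]\subseteq x^\perp\cap y^\perp$, so the whole line of $\Delta(x)$ through $p^*$ and $q^*$ lies in $Y$. Given this, I identify $Y$ inside $\mathsf{A}_{1,1}(q)\times\mathsf{A}_{n,1}(q)$ as follows. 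For each $a\in\mathsf{A}_{1,1}(q)$ the slice $Y_a:=\{b:(a,b)\in Y\}$ is a projective subspace of $\mathsf{A}_{n,1}(q)$, and $B:=\bigcap_a Y_a$ collects the second coordinates whose entire type-B line $\mathsf{A}_{1,1}(q)\times\{b\}$ lies in $Y$. A point $(a,b)\in Y$ carries exactly $(q^{\dim Y_a}-1)/(q-1)$ type-A lines of $Y$ through it plus one type-B line when $b\in B$ and zero otherwise; setting the total equal to $2$ forces, for $q\geq 2$, both $b\in B$ and $\dim Y_a=1$. Hence $Y_a=B$ is a single line of $\mathsf{A}_{n,1}(q)$ for every $a$, and $Y=\mathsf{A}_{1,1}(q)\times B$ is a $(q+1)\times(q+1)$-grid.

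The hardest part is the subspace property: turning pairwise collinearity in $\Delta(x)$ into a global closure condition on $Y$. This relies essentially on \cref{gamma} applied twice through the $\phi$-transfer. Once $Y$ is established as a subspace, the local constraint ``exactly two lines through each point'' rigidly pins $Y$ down inside the product geometry.
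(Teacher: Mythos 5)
Your proof is correct, but it takes a genuinely different route from the paper's. The paper constructs the grid by hand inside $x^\perp\cap y^\perp$: it first exhibits a quadrangle $p\sim v\sim t\sim w\sim p$ by intersecting maximal singular subspaces of different dimensions in the local geometries at several auxiliary vertices, then sweeps out the full $(q+1)\times(q+1)$ point set by varying points along $R[t,v]$ and $R[t,w]$, and finally excludes extra vertices of $x^\perp\cap y^\perp$ by a maximal-clique contradiction in $\Gamma(a)$. You instead transfer the whole problem into the single local geometry $\Delta(x)$ via the ray map $\phi$, use \cref{gamma} (together with the two-common-neighbour count for non-collinear points of $\mathsf{A}_{1,1}(q)\times\mathsf{A}_{n,1}(q)$, applied in $\Delta(p)$) to show that the image $Y$ is a subspace of the Segre product in which every point lies on exactly two lines, and then classify such subspaces by the slice computation $Y_a=B$. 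Both arguments lean on \cref{gamma} and \cref{projplane} as the essential tools; what your version buys is a shorter, more conceptual argument in which the existence of the grid and the non-existence of extra vertices are handled simultaneously by the line-count equation $(q^{\dim Y_a}-1)/(q-1)+[b\in B]=2$, rather than by two separate constructions. The one place to be slightly more explicit is the backward direction of the correspondence between extended rays through $p$ inside $x^\perp\cap y^\perp$ and lines of $\Delta(x)$ through $\phi(p)$ inside $Y$ (needed so that ``exactly two'' survives the transfer), but this follows at once from injectivity of $\phi$ and partial linearity of $\Delta(x)$, so the argument stands.
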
 
\begin{proof}
Let $x\sim p\sim y$.  In $\Gamma(p)$ we find vertices $v$ and $w$ such that $R[p,x]\sim R[p,v]\sim R[p,y]\sim R[p,w]\sim R[p,x]$. Moreover, we may assume that the planes $\alpha_1:=(x^\perp\cap p^\perp\cap v^\perp)^\perp$ and $\beta_1=(y^\perp\cap p^\perp\cap w^\perp)^\perp$ correspond to maximal singular subspaces (hence lines) of $\Delta(p)$. 

In $\Gamma(x)$, the extended rays $R[x,w]$ and $R[x,v]$, which correspond to points of $\Delta(x)$ at mutual distance $2$, are adjacent to a unique common ray $R[x,r]$. Note that the set $U_1:=x^\perp\cap r^\perp\cap v^\perp$, endowed with the extended rays, is an $(n+1)$-dimensional projective space. 

Likewise, in $\Gamma(y)$, there is a vertex $s$ such that $R[y,v]\sim R[y,s]\sim R[y,w]$. The set $\beta_2:=y^\perp\cap v^\perp\cap s^\perp$, endowed with the extended rays, is a projective plane, and $V_1=y^\perp\cap s^\perp\cap w^\perp$ defines an $(n+1)$-dimensional singular subspace. 

Now, in $\Delta(v)$, two maximal singular subspaces of distinct dimension intersect in a unique point, hence $U_1\cap\beta_2=R[v,t]$, for some $t\in X$. Since $\beta_2$ is a plane, we can assume $t\in R[y,s]$. Then $t\in V_1$ and so $t\sim w$. Hence in $x^\perp\cap y^\perp$ we find the quadrangle $p\sim v\sim t\sim w\sim p$. 

Let $z$ be any vertex on $R[t,v]$. Then there is a unique $(n+1)$-dimensional singular subspace $V_2$ of $\Delta$ through $R[y,z]$, and it intersects $\beta_1$ in a unique extended ray $R[y,z']$. Clearly we can choose $z'\in R[p,w]$ (since $\beta_2$ is a projective plane). Likewise, for each vertex $u\in R[t,w]$, the unique $(n+1)$-dimensional singular subspace through $R[x,u]$ intersects the plane $\alpha_1$ in a ray $R[x,u']$, with $u'\in R[p,v]$. Moreover, $\alpha_3:=x^\perp\cap z^\perp\cap {z'}^\perp$ defines a plane and $U_2:=x^\perp\cap u^\perp\cap {u'}^\perp$ defines an $(n+1)$-dimensional singular subspace. In $\Delta(x)$, these spaces intersect in a unique point, which implies that the extended rays $R[z,z']$ and $R[u,u']$ intersect in a point. Hence, varying $z$ on $R[t,v]$ and $u$ on $R[t,w]$, we obtain a $(q+1)\times(q+1)$-grid in $x^\perp\cap y^\perp$.

It remains to show that there are no other vertices contained in $x^\perp\cap y^\perp$. Suppose for a contradiction that some vertex $a\in X$ not on the above grid is adjacent to both $x$ and $y$. Since $R[p,v]$ intersects every maximal singular subspace of dimension $n+1$ of $\Delta$ through $x$, we may assume that $a\in  U_1\setminus R[v,t]$. Then the plane $(a^\perp\cap t^\perp\cap v^\perp)^\perp$ is contained in the two distinct maximal cliques $a^\perp\cap t^\perp\cap v^\perp\cap x^\perp$ and   $a^\perp\cap t^\perp\cap v^\perp\cap y^\perp$. This contradicts the local structure in $\Gamma(a)$. 
\end{proof}

Now the proof of \cref{paraspace} can be repeated verbatim, and we have the following extension of \cref{withoutsegre}.

\begin{theo}\label{withsegre}
Let $\Gamma$ be a connected graph locally isomorphic to the disjoint union of  $\Gamma(\mathsf{A}_{n,2}(q))$, $n\geq 4$, $q$ ranging over all prime powers, $\Gamma(\mathsf{D}_{n,n}(q))$, $4\leq n\leq 9$, $q$ ranging over all prime powers,  $\Gamma(\mathsf{E_{6,1}}(q))$, $q$ again ranging over all prime powers, and $\Gamma(\mathsf{E_{7,7}}(q))$, $q$ once again ranging over all prime powers. Then all local geometries of $\Gamma$ are mutually isomorphic and $\Gamma$ is isomorphic to either $\Gamma(\mathsf{A}_{n,2}(q))$, $n\geq 4$, $\Gamma(\mathsf{D}_{n,n}(q))$, $4\leq n\leq 9$, $\Gamma(\mathsf{E_{6,1}}(q))$, or $\Gamma(\mathsf{E_{7,7}}(q))$, for some prime power $q$. 
\end{theo}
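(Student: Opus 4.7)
The plan is to combine \cref{withoutsegre} with the rank-$2$ material developed in this section. First, by \cref{cliques} the dimensions of maximal cliques of $\Gamma$ through a vertex $p$ equal the dimensions of the maximal singular subspaces of $\Delta(p)$ (shifted by one), and by connectivity of $\Gamma$ these are constant across $X$. Among the parapolar spaces appearing in the hypotheses, only $\mathsf{A_{1,1}}(q)\times \mathsf{A}_{n,1}(q)$ has a maximal singular subspace of dimension $1$, so either every local geometry is a Segre product of this form, or none is. In the latter case, \cref{withoutsegre} already does the job, so I would concentrate on the remaining case, with all local geometries isomorphic to $\mathsf{A_{1,1}}(q)\times \mathsf{A}_{n,1}(q)$ for fixed $q$ and $n$, the constancy of which is noted at the opening of this section.

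Next, the lemma proved just above shows that for any pair $x,y\in X$ at distance $2$ in $\Gamma$, the set $x^\perp\cap y^\perp$ endowed with the extended rays is a $(q+1)\times (q+1)$-grid, hence a polar space of rank $2$. This is the rank-$2$ analogue of \cref{perpquad}. I would then replay the proof of \cref{paraspace} verbatim: $\Delta$ is a connected gamma space by \cref{gamma} together with the connectedness of $\Gamma$; for any two non-collinear points the intersection of their perps is either empty, a single point, or a polar space (by the lemma); and for any extended ray $L=R[x,y]$ the well-definedness of extended rays in $\Gamma(x)$ produces two non-collinear points in $L^\perp$. Thus $\Delta$ is a parapolar space of uniform symplectic rank $2$, whose point residual at every vertex coincides with the prescribed local geometry $\mathsf{A_{1,1}}(q)\times \mathsf{A}_{n,1}(q)$.

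The remaining task is to identify such a $\Delta$ with the line Grassmannian $\mathsf{A}_{n+1,2}(q)$ by means of a local characterisation: a parapolar space of symplectic rank $2$ whose point residuals are Segre products of a line and a projective space of dimension $n$ over $\F_q$ is the line Grassmannian of a projective space of dimension $n+1$ over $\F_q$. Such a characterisation can be extracted from the references already used in the proof of \cref{withoutsegre}, e.g.\ \cite{Coh-Sch-Sch-Mal:22} and \cite{ForumSigma}. Combined with the output of \cref{withoutsegre} for the non-Segre case, this delivers the statement.

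I expect this third step to be the main obstacle, because several standard parapolar-space recognition theorems assume uniform symplectic rank at least $3$ and therefore do not apply directly in our rank-$2$ setting. If the precise statement I need is not available off the shelf, my fallback is to reconstruct the ambient projective space directly: using the two families of maximal singular subspaces supplied by \cref{cliques}---planes coming from the $\mathsf{A_{1,1}}(q)$ factor (via \cref{projplane}) and $(n+1)$-dimensional projective spaces coming from the $\mathsf{A}_{n,1}(q)$ factor---together with the grid structure of symplecta, one can define ``higher-dimensional flats'' by iterated intersection and span operations and verify directly the axioms of a projective space of dimension $n+2$ whose lines realise the vertices of $\Gamma$.
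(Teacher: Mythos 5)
Your proposal is correct and follows essentially the same route as the paper: reduce to the case where every local geometry is $\mathsf{A_{1,1}}(q)\times \mathsf{A}_{n,1}(q)$ via \cref{cliques} and \cref{withoutsegre}, use the grid lemma as the rank-$2$ substitute for \cref{perpquad}, replay \cref{paraspace}, and finish with a point-residual recognition result. The third step you flag as the main obstacle is exactly Lemma~5.4 of \cite{ForumSigma}, which identifies a parapolar space with all point residuals isomorphic to $\mathsf{A_{1,1}}(q)\times \mathsf{A}_{n,1}(q)$ as $\mathsf{A}_{n+2,2}(q)$ (note: dimension $n+2$, not $n+1$ as you wrote), so no fallback reconstruction is needed.
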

\begin{proof}
In view of \cref{withoutsegre}, we may assume that at some vertex the local geometry is  $\mathsf{A_{1,1}}(q)\times \mathsf{A}_{n,1}(q)$, for some $n\geq 2$ and some prime power $q$. \cref{heights} implies that the local geometry at each vertex is defined over $\F_q$. Moreover, since the maximal singular subspaces of $\mathsf{A_{1,1}}(q)\times \mathsf{A}_{n,1}(q)$ have dimensions $1$ and $n$, the argument in the proof of \cref{withoutsegre} using \cref{cliques} shows that the local geometry at each vertex is $\mathsf{A_{1,1}}(q)\times \mathsf{A}_{n,1}(q)$. 

Hence all point residuals of the parapolar space $\Delta$ are isomorphic to $\mathsf{A_{1,1}}(q)\times \mathsf{A}_{n,1}(q)$. Then Lemma 5.4 of \cite{ForumSigma} implies that $\Delta$ is isomorphic to $\mathsf{A}_{n+2,2}(q)$. Since the point graph of $\Delta$ is $\Gamma$, the proof is complete.   
\end{proof}

\begin{rem}
One can merge Main Results~\ref{main3} and~\ref{main2} by assuming that each residual geometry is either a polar space of rank at least $3$, or a parapolar space as in \cref{main2}. The conclusion is then that $\Gamma$ is the point graph of either a polar space, or one of the geometries in the conclusion of \cref{main2}. The reason is that polar spaces only have one type of maximal singular subspaces, and then with the help of \cref{cliques}, one concludes that in all points the residuals are either polar spaces---and we are in the case of \cref{main3}--- or proper parapolar spaces---and we are in the case of \cref{main2}.
\end{rem}

\begin{rem}
In \cref{main2} we can slightly relax the restriction on the diameter of  $\Gamma(\mathsf{D}_{n,n}(q))$ to also include the case $n=10$. Indeed, in this case the proof \cref{withoutsegre} implies that $\Gamma$ is either $\Gamma(\mathsf{D}_{10,10}(q))$, or a proper quotient of $\Gamma(\mathsf{D}_{10,10}(q))$ with respect to a nontrivial automorphism group $G$. However, suppose we are in the latter case and let $g\in G$ be nontrivial. In order to preserve the local structure, $g$ must map every vertex to a vertex at distance $5$ (see Lemma~A.5 of \cite{ForumSigma}), which is an opposite vertex in the corresponding building. This contradicts Theorem~1.2 of \cite{Dev-Par-Mal:13}. Hence no proper quotients occur. 

Without proof we mention that more elaborate arguments also prove that  $\Gamma(\mathsf{D}_{n,n}(q))$, $11\leq n\leq 17$, does not admit a quotient with the same local structure.  
\end{rem}

{\it Affiliations Ferdinand Ihringer:}\\
Dept.~of Mathematics:~Analysis, Logic and Discrete Math., Ghent University, Belgium.\\
Dept.~of Mathematics, Southern University of Science and Technology, Shenzhen, China.\\
E-mail: {\tt ferdinand.ihringer@gmail.com}.

{\it Affiliation other authors:}\\
Dept.~of Mathematics:~Algebra and Geometry, Ghent University, Belgium.\\
Email addresses: \\
{\tt Paulien.Jansen@UGent.be\\
Linde.Lambrecht@UGent.be\\
Yannick.Neyt@UGent.be\\
Daan.Rijpert@UGent.be\\
Hendrik.VanMaldeghem@UGent.be\\
Magali.Victoor@UGent.be}
\end{document}